\documentclass[12pt]{article}
\usepackage{amsfonts, amsmath, amsthm, enumerate}
\usepackage[pdftex]{graphics}
\usepackage[hang]{subfigure}
\usepackage{rotating}



\newcommand{\mb}{\mathbf}



\def\complaint#1{}
\def\withcomplaints{
\newcounter{mycomplaints}
\def\complaint##1{\refstepcounter{mycomplaints}%
\ifhmode%
\unskip%
{\dimen1=\baselineskip \divide\dimen1 by 2 %
\raise\dimen1\llap{\tiny -\themycomplaints-}}\fi%
\marginpar{\tiny [\themycomplaints]: ##1}}%
}

\withcomplaints 
%


\newtheorem{theorem}{Theorem}

\newtheorem{lemma}[theorem]{{Lemma}}
\newtheorem{prop}[theorem]{{Proposition}}
\newtheorem{question}{{Question}}

\def\eop{\hfill {\hfill $\Box$} \medskip}

\def\E{\mathbb E}
\def\R{\mathbb R}
\def\P{\mathbb P}

\def\p{{\bf p}}
\def\pn{{\bf p = (\bf p_1, \bf p_2, \dots, \bf p_n})}
\def\q{{\bf q}}

\def\v{{\bf v}}

\begin{document}

\title {Combining Globally Rigid Frameworks}

\author{R. Connelly \thanks{Research supported~ in~ part~ by~ NSF~ Grant~ No. DMS--0209595 (USA). 
\newline  e-mail:
connelly@math.cornell.edu} \\
Department of Mathematics, Cornell University\\
Ithaca, NY 14853, USA}
\maketitle 

\begin{abstract} Here it is shown how to combine two generically globally rigid bar frameworks in $d$-space to get another generically globally rigid framework.  The construction is to identify $d+1$ vertices from each of the frameworks and erase one of the edges that they have in common.  
\end{abstract}

\section{Introduction and definitions} \label{section:introduction}

Suppose that a finite configuration $\pn$ of labeled points in Euclidean $d$-dimensional space $\E^d$ is given, together with a corresponding graph $G$ whose vertices correspond to the points of $\p$.  Each edge of $G$, called a \emph{member}, is designated as a \emph{cable},  \emph{strut}, or  \emph{bar}.  All this data is denoted as $G(\p)$, and it is called a \emph{tensegrity}, or if all the members of $G$ are bars, $G(\p)$ is called a \emph{bar framework}.  

 We say the tensegrity $G(\p)$ \emph{dominates} the tensegrity $G(\q)$, and write $G(\q) \le G(\p)$, for two configurations $\q$ and $\p$, if 
\begin{eqnarray} 
|\p_i -\p_j| &\ge& |\q_i -\q_j| \quad\mbox{for $\{i,j\}$ a cable,}  \nonumber \\
|\p_i -\p_j| &\le& |\q_i -\q_j| \quad\mbox{for $\{i,j\}$ a strut and}  \label{tens-constraints}\\
|\p_i -\p_j| &=& |\q_i -\q_j| \quad\mbox{for $\{i,j\}$ a bar.} \nonumber 
\end{eqnarray}

This just means that going from $\p$ to $\q$, cables don't get longer, struts don't get shorter, and bars stay the same length.  For bar frameworks, we say $G(\p)$ \emph{is equivalent to} $G(\q)$ if $G(\q) \le G(\p)$ and $G(\p) \le G(\q)$ and we write $G(\q) \simeq G(\p)$. 

Two configurations $\p$ and $\q$ are \emph{congruent}, in $\E^d$, and we write $\p \cong \q$, if there is a $d$-by-$d$ orthogonal matrix $A$ and a vector $\mb{b} \in \E^d$ such that for all $i$, $\q_i = A\p_i + \mb{b}$.  Equivalently, $\p \cong \q$, if and only if $K(\p) \simeq K(\q)$, where $K$ is the complete graph on the vertices of $\p$ and $\q$.

A tensegrity $G(\p)$ is defined to be \emph{globally rigid} in $\E^d$ if for every tensegrity $G(\q)$ in $\E^d$ that is dominated by $G(\p)$, $\q$ is congruent to $\p$.  In other words,  $G(\q) \le G(\p)$ implies $\p \cong \q$.  So a bar framework $G(\p)$ is globally rigid in $\E^d$ if, for all configurations $\q$ in $\E^d$,  $G(\q) \simeq G(\p)$ implies $\p \cong \q$.  A tensegrity or a bar framework $G(\p)$ in $\E^d$ is \emph{universally globally rigid} or just \emph{universally rigid} if it is globally rigid in $\E^D \supset \E^d$ for all $D \ge d$.

A tensegrity $G(\p)$ is defined to be \emph{(locally) rigid} in $\E^d$ if there is an $\epsilon > 0$ and for $|\p-\q| < \epsilon$, any tensegrity $G(\q)$ in $\E^d$ that is dominated by $G(\p)$, $\q$ is congruent to $\p$.  In other words, $\q$ close enough to $\p$ and $G(\q) \le G(\p)$ implies $\p \cong \q$.  So a bar framework $G(\p)$ is locally rigid in $\E^d$ if there is an $\epsilon > 0$ such that  $|\p-\q| < \epsilon$, $\q$  in $\E^d$, and  $G(\q) \simeq G(\p)$ implies $\p \cong \q$.

A configuration $\p$ is said to be \emph{generic} if the coordinates of $\p$ in $\E^d$ are \emph{algebraically independent over the rational numbers}, which means that there is no non-zero polynomial with rational coordinates satisfied by the coordinates of $\p$.  This implies that no $d+2$ nodes lie in a hyperplane, for example, and a lot more.  Figure \ref{fig:general-examples} shows several examples of tensegrities and frameworks with and without the properties discussed here.

\begin{figure}[here]
    \begin{center}
        \includegraphics[width=.8\textwidth]{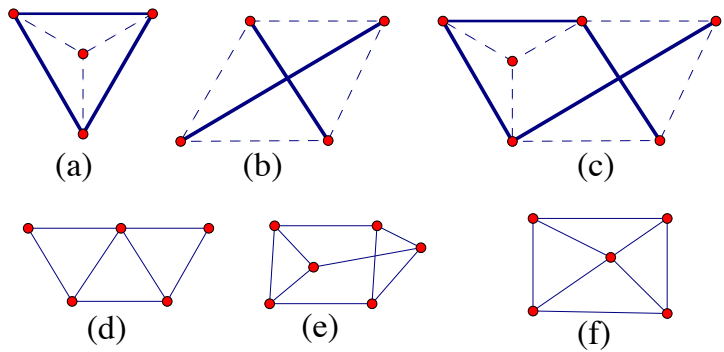}%
        \end{center}
    \caption{The top row shows tensegrities, where dashed lines are cables, solid lines struts.  The bottom row shows bar framworks.  All these examples are locally rigid, (a) and (b) are universally globally rigid, while (c), (d), and (e) are not globally rigid in the plane.  Example (f) is not globally rigid in the plane when the central point lies on the diagonal of the surrounding rectangle, but it is globally rigid in the plane when the configuration is generic.}
    \label{fig:general-examples}
    \end{figure}

\section{Basic previous results}

There has been a lot of work developing computationally feasible criteria for both local and global rigidity that involve purely combinatorial calculations for the graph $G$ and numerical criteria involving, additionally, the configuration $\p$.  A graph $G$ is called \emph{$m$-connected} if it takes the removal of, at least, $m$ vertices to disconnect $G$.  For example, in the plane $\E^2$ there is a popular algorithm, the pebble game, to compute, for a bar framework, whether $G(\p)$ is locally rigid when $\p$ is generic.  This algorithm is purely combinatorial, only depends on the graph $G$, and is polynomial in $n$, the number of vertices of $G$.  For information about this theory, see \cite{Graver-Servatius, Graver, Jacobs-Hendrickson, Whiteley-union}.  For all dimensions, determining whether a given bar framework $G(\p)$ is locally rigid at a generic configuration is also quite feasible, although it is not known to be feasible purely combinatorially.  For every bar framework $G(\p)$ in $\E^d$ with $n \ge d$ vertices, there is an associated $e$-by-$dn$ matrix $R(\p)$, the \emph{rigidity matrix}, such that $G(\p)$ is locally rigid in $\E^d$ if and only if the rank of $R(\p)$ is $ dv - d(d+1)/2$.

In order to understand some of the results about global rigidity it is helpful to look at the case of tensegrities, and in order to understand that it is helpful to understand stresses and stress matrices.  For any tensegrity $G(\p)$, a \emph{stress} $\omega = (\dots, \omega_{ij}, \dots)$ is a scalar $\omega_{ij}=\omega_{ji}$ associated to each member $\{i,j\}$ that connects vertex $i$ to vertex $j$ of $G$.  If vertex $i$ is not connected to vertex $j$, then $\omega_{ij}=0$.  We say the a stress $\omega$ for the tensegrity or framework $G(\p)$ is an \emph{equilibrium stress} if for all $j$, the following vector equation holds:

\begin{equation} \label{equilibrium}
\sum_i \omega_{ij}(\p_i-\p_j) = {\mb 0}.
\end{equation}

If  $G(\p)$ is a tensegrity, we say that $\omega$ is a \emph{proper stress} if $\omega_{ij} \ge 0$ for all cables $\{i,j\}$, and  $\omega_{ij} \le 0$ for all struts $\{i,j\}$.   If $\omega$ is a stress for $G(\p)$, and $G$ has $n$ vertices, form an $n$-by-$n$ symmetric matrix $\Omega$, called the \emph{stress matrix}, as follows:  Each off-diagonal entry of $\Omega$ is $-\omega_{ij}$, and the diagonal entries are such that the row and column sums are $0$.  Figure \ref{fig:square} shows a simple example of a tensegrity with a proper equilibrium stress indicated.

\begin{figure}[here]
    \begin{center}
        \includegraphics[width=.3\textwidth]{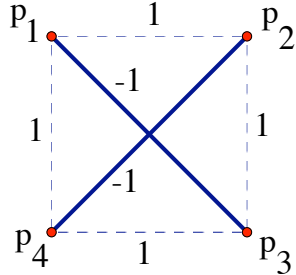}%
        \end{center}
    \caption{A square tensegrity with its diagonals, where a proper equilibrium stress is indicated.}
    \label{fig:square}
    \end{figure}

The stress matrix for this stress is

\begin{equation*} \label{stress-matrix}
\begin{pmatrix}
\,\,\,\,1& -1 & \,\,\,\,1 & -1 \\
- 1& \,\,\,\,1 & -1 & \,\,\,\,1 \\
 \,\,\,\,1& -1 & \,\,\,\,1 & -1 \\
 - 1& \,\,\,\,1 & -1 & \,\,\,\,1 
\end{pmatrix}.
\end{equation*}

In order to understand a fundamental theorem that implies universal global rigidity, we define the following concept.  Let $\v_1, \dots, \v_k$, be vectors in $\E^d$.  Regard these vectors as points in the real projective space $\R\P^{d-1}$ of lines through the origin in $\E^d$.   We say that $\v_1, \dots, \v_k$ \emph{lie on a conic at infinity} if, as points in $\R\P^{d-1}$ they lie on a conic (or quadric) hypersurface.  For example, in the plane $\E^2$, a conic at infinity consists of at most two points.   In $3$-space $\E^3$,  if we project the vectors into a plane, not through the origin, the conic is the usual notion of a conic, including the degenerate case of two lines.  The following is a fundamental result that has motivated a lot of the later results about global rigidity.  This can be found in \cite{Connelly-energy, Connelly-Whiteley}.  

\begin{theorem} \label{thm:fundamental}  Let $G(\p)$ be a tensegrity, where the affine span of $\p=(\p_1, \dots, \p_n)$ is all of $\E^d$, with a proper equilibrium stress $\omega$ and stress matrix $\Omega$.  Suppose further
\begin{enumerate}
\item \label{condition:positive} $\Omega$ is positive semi-definite.
\item \label{condition:rank} The rank of $\Omega$ is $n - d - 1$.
\item \label{condition:quadric} The set of vectors $\{\p_i - \p_j \mid \omega_{ij} \ne 0 \}$ do not lie on a conic at infinity. 
\end{enumerate}
Then $G(\p)$ is universally globally rigid.
\end{theorem}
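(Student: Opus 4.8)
The plan is to use the energy method built from the stress matrix. I would attach to the stress $\omega$ the quadratic energy functional
\[
E(\q)=\sum_{i<j}\omega_{ij}\,|\q_i-\q_j|^{2},
\]
defined for a configuration $\q$ in any $\E^D$ with $D\ge d$ (the non-members contribute nothing since $\omega_{ij}=0$ there). Expanding $|\q_i-\q_j|^2$ and using that every row sum of $\Omega$ vanishes, one checks that $E(\q)=\sum_{k=1}^{D}(\q^{(k)})^{T}\Omega\,\q^{(k)}$, where $\q^{(k)}\in\R^n$ is the vector of $k$-th coordinates of $\q_1,\dots,\q_n$. Because $\omega$ is an equilibrium stress, each coordinate column of $\p$ lies in the kernel of $\Omega$, so in particular $E(\p)=0$.

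The first real step is a sandwich estimate on $E$. Suppose $G(\q)\le G(\p)$ in some $\E^D$. Since $\omega$ is a \emph{proper} stress, each member satisfies $\omega_{ij}|\q_i-\q_j|^2\le\omega_{ij}|\p_i-\p_j|^2$: for a cable $\omega_{ij}\ge 0$ while the cable cannot lengthen, for a strut $\omega_{ij}\le 0$ while the strut cannot shorten, and for a bar the two lengths agree. Summing yields $E(\q)\le E(\p)=0$. On the other hand, condition~\ref{condition:positive} (that $\Omega$ is positive semidefinite) gives $E(\q)\ge 0$. Hence $E(\q)=0$, and since a sum of the nonpositive quantities $\omega_{ij}(|\q_i-\q_j|^2-|\p_i-\p_j|^2)$ vanishes, every term is zero; in particular $|\q_i-\q_j|=|\p_i-\p_j|$ for each member with $\omega_{ij}\ne 0$.

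The second step recovers an affine relation. Since $\Omega$ is positive semidefinite and $E(\q)=\sum_k(\q^{(k)})^{T}\Omega\,\q^{(k)}=0$, each summand is zero, so each coordinate column $\q^{(k)}$ lies in the kernel of $\Omega$. The all-ones vector lies in that kernel (row sums vanish) and the $d$ coordinate columns of $\p$ lie in it by equilibrium; the hypothesis that the affine span of $\p$ is all of $\E^d$ makes these $d+1$ vectors linearly independent, and condition~\ref{condition:rank} ($\mathrm{rank}\,\Omega=n-d-1$) forces them to span the kernel exactly. Therefore every coordinate column of $\q$ is an affine combination of those of $\p$, i.e.\ $\q_i=A\p_i+\mb{b}$ for some $D$-by-$d$ matrix $A$ and some $\mb{b}\in\E^D$: the configuration $\q$ is an affine image of $\p$.

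The final step, which I expect to be the crux, upgrades this affine map to a congruence using condition~\ref{condition:quadric}. From the first step, $|A(\p_i-\p_j)|=|\p_i-\p_j|$ for every member with $\omega_{ij}\ne 0$, that is $(\p_i-\p_j)^{T}(A^{T}A-I)(\p_i-\p_j)=0$. The symmetric $d$-by-$d$ matrix $A^{T}A-I$ defines a quadric in $\R\P^{d-1}$ on which all of these edge vectors lie; but condition~\ref{condition:quadric} says the vectors $\{\p_i-\p_j\mid\omega_{ij}\ne 0\}$ lie on no conic at infinity. Reconciling these forces $A^{T}A-I=0$, so $A^{T}A=I$ and $A$ is an isometric embedding of $\E^d$ into $\E^D$; hence $\q=A\p+\mb{b}\cong\p$, giving global rigidity in every $\E^D\supseteq\E^d$, i.e.\ universal global rigidity. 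The point demanding the most care is the interplay between the conic-at-infinity hypothesis and nondegeneracy of $A^{T}A-I$, together with the bookkeeping that ``affine span $=\E^d$'' really produces $d+1$ independent kernel vectors so that the rank hypothesis pins the kernel of $\Omega$ down to exactly the affine data of $\p$.
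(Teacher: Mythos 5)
Your argument is correct and is exactly the standard energy-method proof of this theorem: the paper itself gives no proof but cites \cite{Connelly-energy, Connelly-Whiteley}, where the result is established by precisely this three-step scheme (the sandwich $0\le E(\q)\le E(\p)=0$ from positive semi-definiteness and properness of the stress, the rank condition pinning $\ker\Omega$ to the affine data of $\p$ so that $\q$ is an affine image of $\p$, and the conic-at-infinity condition forcing $A^{T}A=I$). Nothing further is needed.
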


In many cases, Condition \ref{condition:quadric}.) is easy to verify.  The difficulty usually lies with Condition \ref{condition:positive}.) and Condition \ref{condition:rank}.).  When the affine span of $\p$ is $d$-dimensional, then the rank of $\Omega$ is at most $n - d - 1$, because of the equilibrium conditions (\ref{equilibrium}).  When the three conditions of Theorem \ref{thm:fundamental} are satisfied we say that the tensegrity is \emph{super stable}.  

A partial converse to Theorem \ref{thm:fundamental} is the following result of S. Gortler,  A. D. Healey, and D. Thurston \cite{Gortler-Thurston}.
\begin{theorem} \label{thm:Gortler-Thurston}  Let $G(\p)$ be a universally globally rigid bar framework in $\E^d$, where $\p$ is generic and $G$ has at least $d+2$ vertices.  Then $G(\p)$ is super stable.  
\end{theorem}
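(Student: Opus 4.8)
The plan is to prove the theorem by passing through the intermediate notion of \emph{dimensional rigidity} and extracting the stress matrix as a semidefinite duality certificate, invoking the hypotheses on genericity and on $n\ge d+2$ only at the two places where they are genuinely needed. First I would record the elementary constraint on ranks. Writing $P$ for the $(d+1)\times n$ matrix whose $i$-th column is $(\p_i,1)$, every equilibrium stress matrix $\Omega$ of $G(\p)$ satisfies $P\Omega=0$, so the row space of $P$ (of dimension $d+1$, since the affine span of $\p$ is all of $\E^d$) lies in $\ker\Omega$; hence $\mathrm{rank}\,\Omega\le n-d-1$ for \emph{every} stress. Thus $n-d-1$ is the largest conceivable rank, and the whole content of the statement is that universal global rigidity forces this maximum to be attained by a \emph{positive semi-definite} $\Omega$, together with the conic condition.

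As the first reduction I would reformulate everything through Gram matrices. After centering, the equivalent frameworks $G(\q)$, in \emph{all} dimensions $D\ge d$, correspond bijectively to the points of the spectrahedron
\[
\mathcal S=\{\,X\succeq 0 : X_{ii}+X_{jj}-2X_{ij}=|\p_i-\p_j|^2 \text{ for every edge } \{i,j\}\,\},
\]
with $\p$ itself giving a distinguished point $X_{\p}$ of rank $d$. In this language, universal rigidity is exactly the assertion that $\mathcal S=\{X_{\p}\}$ is a single point. The stress matrices are precisely the dual (Lagrange–multiplier) matrices of this feasibility problem: for a stress $\Omega$ and any $X\in\mathcal S$ one has $\langle\Omega,X\rangle=\langle\Omega,X_{\p}\rangle$ by the equilibrium equations, and this common value is $0$. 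Consequently, if $\Omega\succeq 0$ then every feasible $X$ has $\mathrm{range}\,X\subseteq\ker\Omega$; combined with $\Omega\mb 1=0$ this places the affine span of every equivalent framework inside the space cut out by $\ker\Omega$, of dimension $n-\mathrm{rank}\,\Omega-1$.

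The heart of the proof, and the step I expect to be the main obstacle, is to show that the maximal rank $r$ of a positive semi-definite stress is exactly $n-d-1$. Suppose instead that $r<n-d-1$, and let $\Omega^{*}$ be a positive semi-definite stress attaining this maximal rank; by convexity $\Omega^{*}$ lies in the relative interior of the cone of positive semi-definite stresses, so its kernel $K=\ker\Omega^{*}$, of dimension $n-r>d+1$, contains the range of every point of $\mathcal S$. I would then restrict attention to the face $\mathcal F=\{X\in\mathcal S:\mathrm{range}\,X\subseteq K\}$, a spectrahedron living in the symmetric matrices supported on $K$, in which $X_{\p}$ has the deficient rank $d<n-r-1$. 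The goal is to produce a second point of $\mathcal F$, which would be an equivalent but non-congruent framework and would contradict universal rigidity. This is forced by a dimension count: because no further positive semi-definite stress can shrink $K$ (maximality of $\Omega^{*}$), the facial reduction of the spectrahedron is already complete, so $X_{\p}$ cannot be an isolated feasible point while its rank falls short of the ambient $n-r-1$. It is precisely here that genericity of $\p$, and $n\ge d+2$ to guarantee a nontrivial stress at all, are indispensable: genericity makes the Jacobian of the edge-length constraints, restricted to the symmetric matrices supported on $K$, attain its expected maximal rank, so that the excess kernel dimension $n-r-1-d>0$ translates, via the implicit function theorem, into an honest positive-dimensional family of feasible Gram matrices through $X_{\p}$. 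Any nearby member is a non-congruent equivalent framework, the desired contradiction. Hence $r=n-d-1$, establishing Conditions \ref{condition:positive}.) and \ref{condition:rank}.) for $\Omega^{*}$.

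It remains to verify Condition \ref{condition:quadric}.), that the stressed directions $\{\p_i-\p_j\mid\omega^{*}_{ij}\ne 0\}$ do not lie on a conic at infinity; as noted after Theorem \ref{thm:fundamental}, this is the easy condition. If they did lie on a common quadric, there would be a nonzero symmetric $d\times d$ matrix $Q$ with $(\p_i-\p_j)^{T}Q(\p_i-\p_j)=0$ for every stressed edge. For a generic configuration these are nontrivial polynomial relations among algebraically independent coordinates, so the linear system they impose on the $\binom{d+1}{2}$ entries of $Q$ cannot degenerate; since the rank-$(n-d-1)$ stress with $n\ge d+2$ forces its supporting subgraph to carry enough edges in generic position, the only solution is $Q=0$, a contradiction. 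Therefore all three conditions of Theorem \ref{thm:fundamental} hold for $\Omega^{*}$, and $G(\p)$ is super stable, as claimed.
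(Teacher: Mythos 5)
The paper does not actually prove this statement: it is quoted as a theorem of Gortler, Healy, and Thurston and cited from \cite{Gortler-Thurston}, so there is no in-paper argument to compare against. Your framework --- Gram matrices, the spectrahedron $\mathcal S$, stress matrices as dual certificates with $\langle\Omega,X\rangle=0$ on all of $\mathcal S$, and facial reduction by a maximal-rank positive semi-definite stress $\Omega^{*}$ --- is the right one and matches the strategy of the cited proof, but the proposal has a genuine gap at exactly the step you identify as the heart of the matter.

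You assert that once facial reduction stalls (no positive semi-definite stress further shrinks $K=\ker\Omega^{*}$) while $\mathrm{rank}\,X_{\p}=d<n-r-1$, a ``dimension count'' plus the implicit function theorem yields a positive-dimensional family of feasible Gram matrices. That assertion \emph{is} the theorem, not a routine consequence of genericity, and as stated it fails for two concrete reasons. First, the relevant Jacobian is the rigidity matrix of the configuration lifted into $K$ (an $(n-r-1)$-dimensional universal configuration for $\Omega^{*}$); this lifted configuration is \emph{not} generic --- its coordinates are algebraically dependent on those of $\p$ through $\Omega^{*}$ --- so you cannot invoke genericity of $\p$ to claim the restricted Jacobian attains its ``expected'' rank. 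Controlling precisely this loss of genericity under iterated facial reduction is the main technical achievement of the Gortler--Healy--Thurston proof. Second, even granting maximal Jacobian rank, the excess $n-r-1-d>0$ does not by itself produce a flex: any configuration whose affine span has dimension $d<D$ automatically has out-of-span infinitesimal motions in $\E^{D}$, so a corank count of the edge-length map in the larger space never distinguishes rigid from flexible there. Read literally, your argument would show that any rank-deficient point of a spectrahedron admitting no further reducing certificate is non-isolated, which is false in general --- there are non-generic universally rigid frameworks that are not super stable, so the conclusion genuinely cannot be reached by convex geometry alone. (Your treatment of the conic-at-infinity condition is also only sketched, but that part is indeed routine.) Repairing the proof requires the full iterated facial reduction argument of \cite{Gortler-Thurston}, including the bookkeeping of the fields of definition of the successive kernels.
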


So this means that under the conditions of Theorem \ref{thm:Gortler-Thurston}, there is an equilibrium stress such that the three conditions of Theorem \ref{thm:fundamental} hold.  So if the bars are converted to cables or struts to follow the sign of that stress, the bar constraints can be replaced by the much weaker inequality tensegrity constraints in (\ref{tens-constraints}).

Figure \ref{fig:general-examples}(a) and Figure \ref{fig:general-examples}(b) are super stable, while Figure \ref{fig:general-examples}(c) satisfies Condition \ref{condition:positive}.) and Condition \ref{condition:quadric}.), but not  Condition \ref{condition:rank}.) and, indeed, Figure \ref{fig:general-examples}(c) is not even globally rigid in the plane.

In order to understand Condition \ref{condition:rank}.) and use it, it helps to interpret the rank
condition on $\Omega$.  One very useful way to do this uses the following concept.  Suppose $\p$ is configuration with $n$ vertices in $\E^d$ with an equilibrium stress $\omega$.  We say the configuration $\p$ is \emph{universal with respect to $\omega$}  if,  when $\q$ is any other configuration on the same number of vertices such that $\omega$ is an equilibrium stress for $\q$, then the configuration $\q$ is an affine image of the configuration $\p$.  In other words, there is a $d$-by-$d$ matrix $A$ and a vector ${\mb v} \in \E^d$ such that $A\p_i + {\mb v} = \q_i$ for all $i=1, \dots, n$.  The following result in \cite{Connelly-energy} relates the notion of a universal configuration to the rank of the stress matrix.  We assume that the affine span of the configuration $\p$ is $d$-dimensional.

\begin{prop}  A non-zero equilibrium stress $\omega$ for a configuration $\p$ with $n$ vertices in $\E^d$ is universal if and only if the rank of the associated stress matrix $\Omega$ is $n-d-1$.
\end{prop}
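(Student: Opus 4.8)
The plan is to translate both the hypothesis (the rank condition) and the conclusion (universality) into statements about the kernel of $\Omega$, by viewing a configuration on $n$ vertices as an $n$-by-$d$ matrix whose rows are the points. The bridge is the elementary observation that, writing $Q$ for the matrix of a configuration $\q$, the equilibrium condition (\ref{equilibrium}) for $\q$ is exactly the matrix equation $\Omega Q = 0$; equivalently, each of the $d$ coordinate columns of $Q$ lies in $\ker \Omega$. I would verify this translation first, since everything else rests on it.

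Next I would record the vectors that automatically lie in $\ker \Omega$. Because the row sums of $\Omega$ vanish, the all-ones vector $\mb{1} \in \R^n$ lies in $\ker \Omega$; and because $\omega$ is an equilibrium stress for $\p$, each coordinate column $c_1, \dots, c_d$ of the matrix of $\p$ lies in $\ker \Omega$. I set $W = \mathrm{span}(\mb{1}, c_1, \dots, c_d) \subseteq \ker \Omega$. I would then check that, since the affine span of $\p$ is all of $\E^d$, the vectors $\mb{1}, c_1, \dots, c_d$ are linearly independent (a nontrivial dependence would force all the $\p_i$ into a common hyperplane), so $\dim W = d+1$ and hence $\mathrm{rank}\,\Omega \le n-d-1$ with no further hypotheses.

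I would then reinterpret ``$\q$ is an affine image of $\p$'' in the same language: the existence of $A$ and $\mb{v}$ with $\q_i = A\p_i + \mb{v}$ for all $i$ is equivalent to saying that every coordinate column of $Q$ is a linear combination of $c_1, \dots, c_d$ and $\mb{1}$, that is, every column of $Q$ lies in $W$. Combining this with the previous two paragraphs makes universality purely linear-algebraic: $\p$ is universal with respect to $\omega$ precisely when, for every configuration $\q$ whose coordinate columns lie in $\ker \Omega$, those columns already lie in $W$. Since every vector of $\ker\Omega$ can be realized as a coordinate column of such a $\q$, this is equivalent to the equality $\ker \Omega = W$.

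Finally I would close the two implications. If $\mathrm{rank}\,\Omega = n-d-1$, then $\dim\ker\Omega = d+1 = \dim W$, and since $W \subseteq \ker\Omega$ this forces $\ker\Omega = W$, giving universality. Conversely, if $\mathrm{rank}\,\Omega < n-d-1$, then $W$ is a proper subspace of $\ker\Omega$, so I may choose $z \in \ker\Omega \setminus W$ and build an explicit configuration $\q$ — for instance the one whose first coordinate column is $z$ and whose remaining columns agree with those of $\p$ — which admits $\omega$ as an equilibrium stress yet is not an affine image of $\p$, contradicting universality. I expect the only step needing care is this last construction: confirming that swapping in a kernel vector outside $W$ really yields a $\q$ that fails the column characterization of affine images, together with the linear-independence check of $\mb{1}, c_1, \dots, c_d$. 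Neither point is deep, so once the dictionary between kernel vectors and configurations is in place the proposition should follow cleanly.
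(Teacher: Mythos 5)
Your argument is correct and is essentially the proof the paper is alluding to: the paper itself gives no proof of this proposition (it cites \cite{Connelly-energy}), but the discussion immediately following it --- that the coordinate vectors of $\p$ together with the all-ones vector form a basis of $\ker(\Omega)$ exactly in the universal case --- is precisely your dictionary between kernel vectors, columns of the configuration matrix, and affine images. Both directions of your argument, including the linear independence of $\mb{1}, c_1, \dots, c_d$ from the full affine span of $\p$ and the construction of a non-affine-image $\q$ from a kernel vector outside $W$, are sound.
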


A basis, including the vector of all one's, for the kernel of $\Omega$, $\ker(\Omega)$, can be used to construct a universal configuration as shown in \cite{Connelly-energy}.  For example, in $\E^d$ when the configuration $\p$ is universal with respect to the stress corresponding to $\Omega$, the $d$ vectors consisting of the $i$-th coordinates, for $i = 1, \dots, d$ and the vector of $n$ one's correspond to a basis for $\ker(\Omega)$.   When the emphasis is on a fixed configuration rather than a fixed equilibrium stress, we say $\Omega$ is of \emph{maximal rank} if its rank is $n-(d+1)$.

\section{Combining tensegrities}\label{sect:combining-tensegrities}

The stress matrix, since it is symmetric, can be regarded as a quadratic form on the space of all configurations $\p$, and we can add these quadratic forms as functions. (Technically, though it is the tensor product of $\Omega$ with the identity matrix $I^d$,  $\Omega \otimes I^d$, that corresponds to the quadratic form on the configurations $\p$.)  When we add positive semi-definite quadratic forms, the sum is positive semidefinite, and Condition \ref{condition:quadric}.) is also easy to verify in most cases.  It is also possible to check Condition \ref{condition:rank}.), when it is true.  For example, it is easy to see that Figure \ref{fig:general-examples}(c) is obtained by superimposing the rightmost strut in Figure \ref{fig:general-examples}(a) and the leftmost cable in \ref{fig:general-examples}(b).  If the stresses for (a) and (b) are adjusted by positive rescaling, the stress vanishes on the overlap.  But the rank of the stress matrix of the sum is $6 - (2+1) - 1 = 4$, one less than is needed for super stability.   Nevertheless, in any configuration dominated by Figure \ref{fig:general-examples}(c), in any dimension, is such that any pair of vertices both coming from either Figure \ref{fig:general-examples}(a) or both coming from Figure \ref{fig:general-examples}(b) have their distances preserved.  On the other hand if the overlap of two tensegrities consists of at least $d+1$ vertices, then the maximal rank Condition \ref{condition:rank}.) is preserved. 

 \begin{prop}\label{prop:combining-tensegrities} Suppose that $G_1(\p)$ and $G_2(\q)$ are two super stable tensegrities in  $\E^d$ with at least $d+1$ vertices in common, such that the $d+1$ vertices do not lie in a $(d-1)$-dimensional hyperplane, and such that one cable in $G_1$ overlaps with a strut in $G_2$.  Then the tensegrity  $G(\p \cup \q)$ obtained by superimposing their common vertices and members, but erasing the one common cable and strut, is also superstable.
  \end{prop}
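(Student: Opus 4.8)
The plan is to manufacture a single proper equilibrium stress for the combined tensegrity out of the two given ones and then verify, one at a time, the three conditions of Theorem~\ref{thm:fundamental}. First I would fix a common coordinate system. Since $G_1(\p)$ and $G_2(\q)$ are super stable they are universally globally rigid, and because the $d+1$ shared vertices affinely span $\E^d$ the two frameworks agree up to congruence on their overlap; so I may assume $\p$ and $\q$ literally coincide there and write $\r=\p\cup\q$ for the combined configuration on the union $V=V_1\cup V_2$ of all $n$ vertices, whose affine span is all of $\E^d$. Let $\omega^1,\omega^2$ be the super stable stresses, with stress matrices $\Omega_1,\Omega_2$, and let $e=\{a,b\}$ be the overlapping member, a cable in $G_1$ ($\omega^1_{ab}\ge 0$) and a strut in $G_2$ ($\omega^2_{ab}\le 0$). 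Choose scalars $\alpha,\beta>0$ with $\alpha\omega^1_{ab}+\beta\omega^2_{ab}=0$ (possible since the two signs are opposite) and set $\omega=\alpha\omega^1+\beta\omega^2$, each $\omega^i$ extended by zero to $V$. Then $\omega$ vanishes on $e$, is supported on the members of $G$, satisfies the equilibrium equations~(\ref{equilibrium}) at every vertex, and, being a positive combination, keeps the correct sign on every remaining cable and strut; hence it is a proper equilibrium stress for $G(\r)$ with stress matrix $\Omega=\alpha\tilde\Omega_1+\beta\tilde\Omega_2$, where $\tilde\Omega_i$ is $\Omega_i$ padded by zeros. Condition~\ref{condition:positive}.) is then immediate, since a positive combination of positive semi-definite forms is positive semi-definite.

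The rank Condition~\ref{condition:rank}.) is the heart of the matter and is exactly where the $d+1$ spanning overlap is used. Because $\alpha,\beta>0$ and both summands are positive semi-definite, a vector $\mathbf{x}$ lies in $\ker\Omega$ if and only if it lies in $\ker\tilde\Omega_1\cap\ker\tilde\Omega_2$ (a positive combination of semi-definite forms vanishes on $\mathbf{x}$ only when each summand does), and $\mathbf{x}\in\ker\tilde\Omega_i$ if and only if its restriction to $V_i$ lies in $\ker\Omega_i$. By the maximal-rank (universality) hypothesis on each $G_i$, $\ker\Omega_i$ consists exactly of the affine functions of the configuration, so $\mathbf{x}_i=\mathbf{a}_1\cdot\r_i+c_1$ for $i\in V_1$ and $\mathbf{x}_i=\mathbf{a}_2\cdot\r_i+c_2$ for $i\in V_2$. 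On the common vertices both formulas hold, and since these include $d+1$ affinely independent points, the two affine functions must coincide, yielding a single global affine function on all of $V$. Hence $\ker\Omega$ is precisely the $(d+1)$-dimensional space of affine functions of $\r$, so $\operatorname{rank}\Omega=n-d-1$, as required.

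Condition~\ref{condition:quadric}.) looks delicate because erasing $e$ deletes the direction $\r_a-\r_b$ from the stressed set, and a priori the surviving directions might lie on a conic even though those of $G_1$ do not. This is resolved by the equilibrium identity $P_1\Omega_1P_1^{\top}=0$, where $P_1$ is the $d\times|V_1|$ matrix of the coordinates of $\r$ on $V_1$ (each coordinate vector lies in $\ker\Omega_1$, which is just (\ref{equilibrium})). Suppose a nonzero symmetric $Q$ annihilates every direction stressed by $\omega$, i.e.\ $(\r_i-\r_j)^{\top}Q(\r_i-\r_j)=0$ for all members stressed in $G$. Taking the trace against $Q$ gives $\operatorname{trace}(Q\,P_1\Omega_1P_1^{\top})=\sum_{ij}\omega^1_{ij}(\r_i-\r_j)^{\top}Q(\r_i-\r_j)=0$; apart from $e$, every member stressed by $\omega^1$ is still stressed by $\omega$ (the other overlaps are not sign-reversed, so their stresses cannot cancel) and is therefore annihilated by $Q$, leaving $\omega^1_{ab}(\r_a-\r_b)^{\top}Q(\r_a-\r_b)=0$ and hence $(\r_a-\r_b)^{\top}Q(\r_a-\r_b)=0$ as well. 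Thus $Q$ annihilates all of $G_1$'s stressed directions, contradicting Condition~\ref{condition:quadric}.) for $G_1$; so no such $Q$ exists and the combined directions avoid a conic at infinity. With all three conditions verified, $G(\r)$ is super stable. I expect the rank step to be the main obstacle, since it is the one genuinely forcing the $d+1$ spanning hypothesis, with the conic step a close second because of the deleted direction.
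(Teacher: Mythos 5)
Your overall strategy is the right one and is essentially the one the paper intends: the paper does not actually prove Proposition~\ref{prop:combining-tensegrities} in detail (it defers to \cite{Connelly-energy}), but the paragraph preceding it sketches exactly your plan --- rescale the two stresses so they cancel on the designated cable--strut pair, note that a positive combination of positive semi-definite stress matrices is positive semi-definite, and argue that the $d+1$ spanning common vertices preserve the maximal rank condition. Your rank argument is in fact cleaner than the machinery the paper later develops for the bar-framework analogue (Lemmas~\ref{lem:sum} and~\ref{lem:overlap}): because both matrices are positive semi-definite, $\ker(\alpha\tilde\Omega_1+\beta\tilde\Omega_2)=\ker\tilde\Omega_1\cap\ker\tilde\Omega_2$ for \emph{every} $\alpha,\beta>0$, so you avoid the rank-complementarity hypothesis of Lemma~\ref{lem:sum}, and your identification of that intersection with the global affine functions of the configuration (via the $d+1$ affinely spanning shared vertices) is exactly the content of Lemma~\ref{lem:overlap}. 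Two further remarks: your choice of $\alpha,\beta>0$ with $\alpha\omega^1_{ab}+\beta\omega^2_{ab}=0$ tacitly assumes $\omega^1_{ab}$ and $\omega^2_{ab}$ are both nonzero (if exactly one vanishes, no positive combination removes the member); and your trace argument for Condition~\ref{condition:quadric}.) is a genuine addition, since the paper only remarks that this condition is ``easy to verify in most cases.''

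That trace argument, however, contains the one real gap. You assert parenthetically that ``the other overlaps are not sign-reversed, so their stresses cannot cancel.'' This contradicts the paper's own gloss immediately after the proposition: when another cable of $G_1$ overlaps a strut of $G_2$, the resulting member ``can be either a cable or strut or disappear, depending on the stresses'' --- Figure~\ref{fig:overlapping-tensegrities} exhibits precisely such a second sign-reversed overlap at $\{1,4\}$, which merely happens not to cancel there. If some member other than $e$ also satisfies $\alpha\omega^1_{ij}+\beta\omega^2_{ij}=0$, then after applying the hypothesis on $Q$ your identity $\operatorname{trace}(Q\,P_1\Omega_1P_1^{\top})=0$ leaves a sum of \emph{several} terms $\omega^1_{ij}(\r_i-\r_j)^{\top}Q(\r_i-\r_j)$ equal to zero, not a single one, and you cannot conclude that each vanishes; the contradiction with Condition~\ref{condition:quadric}.) for $G_1$ then does not follow. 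So your verification of Condition~\ref{condition:quadric}.) is valid only under the extra assumption that the designated cable--strut pair is the unique member whose stress cancels in the combination; in the general situation the proposition permits, this step needs a separate argument (or the condition must simply be checked directly, which is all the paper itself claims).
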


It is understood that in $G$, if two other cables overlap, the resulting member in $G$ is a cable; if two struts overlap, the resulting member is a strut; and if another cable and strut overlap, the resulting member can be either a cable or strut or disappear, depending on the stresses of $G_1(\p)$ and $G_2(\q)$.  Figure \ref{fig:overlapping-tensegrities} shows an example of this.

\begin{figure}[here]
    \begin{center}
        \includegraphics[width=.8\textwidth]{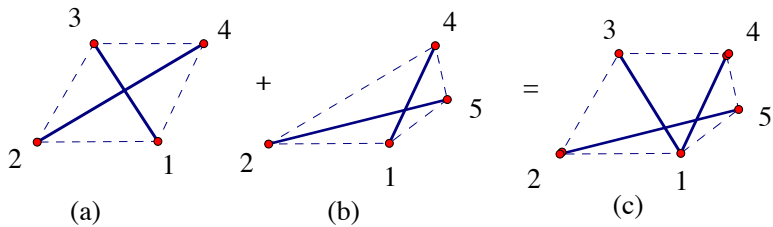}%
        \end{center}
    \caption{Figure (a) is combined with Figure (b) to get Figure (c) as with Proposition \ref{prop:combining-tensegrities}.  The stress in the $\{2,4\}$ strut in Figure (a) is scaled to cancel with the stress in the $\{2,4\}$ cable in Figure (b).  Note that the stress in the $\{1,4\}$ cable of in Figure (a) does not cancel with the stress in the $\{1,4\}$ strut in Figure (b).  The final stress in the $\{1,4\}$ member is negative and it is a strut in Figure (c) because of convexity of the five points and the equilibrium condition (\ref{equilibrium}).}
    \label{fig:overlapping-tensegrities}
    \end{figure}

The example of Figure \ref{fig:overlapping-tensegrities} is one case of a \emph{Cauchy polygon}, and Proposition \ref{prop:combining-tensegrities} is explained in more detail in \cite{Connelly-energy}.  Note that with this process, it is necessary to match a strut with a cable.

\section{Globally rigid generic bar frameworks}

For bar frameworks the story for global rigidity is different.  The starting point is to assume that the configuration $\p$ is generic, which has advantages and disadvantages. An advantage is that, in principle, generic global rigidity in $\E^d$ can be calculated with the help of some numerical calculation, but the downside is that the generic condition is hard to work with computationally.  For the case of local rigidity, the condition of being generic can be replaced by some polynomial conditions on the coordinates that are to be avoided.  For the case of global rigidity in $\E^d$, for $d \ge 3$, there are some polynomial conditions also that are to be avoided, but they seem to be intrinsically difficult to calculate.  The following basic result can serve as a starting point.  The ``if" part of the statement is due to \cite{Connelly-global}, and the ``only if" part is due to S. Gortler, A. D. Healy, and D. P. Thurston \cite{Gortler-Thurston}. 

\begin{theorem} \label{thm:generic-global}  Let $G(\p)$ be a bar framework at a generic configuration $\p$ in $\E^d$ with $n \ge d + 2$ vertices.  It is globally rigid in $\E^d$ if and only if there is a non-zero equilibrium stress whose stress matrix $\Omega$ has rank $n-d-1$.
\end{theorem}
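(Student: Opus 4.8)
I would prove the two implications separately, since the ``if'' part (a rank-$(n-d-1)$ stress forces global rigidity) and the ``only if'' part (global rigidity forces such a stress) are quite different in character.

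For the \emph{if} direction the natural tool is the stress energy $E_\omega(\r)=\sum_{\{i,j\}}\omega_{ij}|\r_i-\r_j|^2$, which one rewrites as the quadratic form $\langle \r,(\Omega\otimes I_d)\r\rangle$. Since $\omega$ is an equilibrium stress, $\p$ lies in $\ker(\Omega\otimes I_d)$, so $E_\omega(\p)=0$; and since $E_\omega$ depends only on the squared lengths of the members of $G$, any $\q$ with $G(\q)\simeq G(\p)$ also satisfies $E_\omega(\q)=E_\omega(\p)=0$. For a super stable tensegrity $\Omega$ would be positive semidefinite and this would immediately give $\q\in\ker(\Omega\otimes I_d)$, but for a bar framework $\Omega$ need not be definite, and overcoming this is the crux. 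The plan is to use genericity to show that $\omega$ is in fact an equilibrium stress for $\q$ as well; granting this, the Proposition identifying rank $n-d-1$ with universality gives $\q_i=A\p_i+\v$ for some $d$-by-$d$ matrix $A$ and vector $\v$, and then the condition $|A(\p_i-\p_j)|=|\p_i-\p_j|$ on members forces $A^{T}A=I$, because at a generic $\p$ the direction vectors of the members impose enough independent linear conditions on the symmetric matrix $A^{T}A-I$ to make it vanish (here the existence of a nonzero stress already guarantees enough members since $n\ge d+2$). Hence $A$ is orthogonal and $\p\cong\q$.

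The main obstacle in the \emph{if} direction is therefore the claim that a generic $\p$ and every equivalent $\q$ share the stress $\omega$. I would establish this by passing to the squared-length measurement map $m\colon\E^{dn}\to\E^{e}$, whose image is an algebraic variety $M_G$: the equilibrium stresses at a configuration are exactly the conormal directions to $M_G$ at its image point (equivalently, the left null vectors of the rigidity matrix). Because $m(\p)=m(\q)$, the stresses of $\p$ and of $\q$ are conormal to $M_G$ at one and the same point, so they coincide provided that point is a smooth point of $M_G$ whose tangent space is computed equally from $\p$ and from $\q$; genericity of $\p$ together with an upper-semicontinuity argument for the rank of the rigidity matrix is what makes this work. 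This algebraic-geometric step is the delicate part of Connelly's argument.

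For the \emph{only if} direction I would argue the contrapositive: if every equilibrium stress of $G(\p)$ has stress matrix of rank strictly below $n-d-1$, then $G(\p)$ is not globally rigid. I would first use that the maximal achievable stress-matrix rank is a generic invariant of the pair $(G,d)$, independent of the particular generic $\p$, and then analyze over $\mathbb{C}$ the fiber of $m$ through $\p$: a deficiency in the stress rank forces this fiber to be strictly larger than the complex congruence orbit of $\p$, from which one extracts a configuration equivalent to but not congruent to $\p$. Carrying out this fiber-dimension analysis and, above all, descending from a complex alternative configuration to a genuine real one is the technical heart of the Gortler--Healy--Thurston theorem, and I expect it to be the single hardest step of the whole proof. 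Finally I would flag a tempting shortcut that I would \emph{not} use: once $G(\p)$ is known to be universally globally rigid, Theorem~\ref{thm:Gortler-Thurston} already supplies a rank-$(n-d-1)$ stress, so it would suffice to upgrade ``globally rigid in $\E^d$'' to ``universally globally rigid''; but for generic frameworks that upgrade is essentially equivalent to the present theorem, so relying on it would be circular, and I would prove the rank condition directly instead.
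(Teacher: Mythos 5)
You should first note that the paper does not prove this theorem at all: it is quoted as a known result, with the ``if'' direction attributed to \cite{Connelly-global} and the ``only if'' direction to Gortler, Healy, and Thurston \cite{Gortler-Thurston}, so there is no in-paper argument to compare against. Measured against those sources, your outline is faithful: the stress-energy argument, the identification of equilibrium stresses with left null vectors of the rigidity matrix (conormals to the measurement variety), the use of genericity to transfer the stress from $\p$ to an equivalent $\q$, the passage through universality of the configuration to an affine map $A$, and the conic-at-infinity argument forcing $A^{T}A=I$ are exactly the skeleton of Connelly's proof; the fiber-dimension analysis of the measurement map is exactly the skeleton of the Gortler--Healy--Thurston converse. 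You also correctly identify and reject the circular shortcut through Theorem \ref{thm:Gortler-Thurston}, which is a real trap here.

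That said, what you have written is a roadmap rather than a proof: both places you flag as ``the delicate part'' are precisely the places where all the content lives, and you do not execute either. In the ``if'' direction you still owe (i) the argument that $m(\p)$ is a smooth point of the measurement variety whose Zariski tangent space equals the image of $dm(\p)$, so that $\mathrm{im}\,dm(\q)$ is contained in it and every stress of $\p$ annihilates it (this is where genericity and rank semicontinuity must actually be deployed), and (ii) a proof, not just an assertion, that at a generic $\p$ the member directions carrying nonzero stress do not lie on a conic at infinity --- your parenthetical ``$n\ge d+2$ guarantees enough members'' is not an argument, since one needs roughly $d(d+1)/2$ projectively independent directions among the stressed members, which is a separate lemma of Connelly's. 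In the ``only if'' direction essentially nothing is supplied beyond a statement of intent; the mod-two degree argument on the quotiented measurement map and the descent from a complex alternative configuration to a real one constitute the bulk of \cite{Gortler-Thurston} and cannot be compressed to a sentence. So: right strategy, correctly sourced, honestly annotated, but with the two hardest steps left as acknowledged black boxes.
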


The only globally rigid (generic) frameworks $G(\p)$ not covered in Theorem \ref{thm:generic-global} are when $G$ is the complete graph on less than $d+2$ vertices.  Note that Theorem \ref{thm:generic-global} essentially involves Condition \ref{condition:rank}.) of Theorem \ref{thm:fundamental}.  Condition \ref{condition:quadric}.) follows easily from the generic hypothesis and the equilibrium stress.

Note that a consequence of (the ``only if" part of and that the stress rank condition is a generic property) Theorem \ref{thm:generic-global} is that if $G(\p)$ is globally rigid at one generic configuration $\p$, then $G(\q)$ is globally rigid at all other generic configurations $\q$.  Furthermore, although generic configurations are hard to calculate concretely, it is enough to verify that for some configuration $\p$ the rank of the rigidity matrix $R(\p)$ is $dn - d(d+1)/2$, and the rank of a stress matrix is $n - d - 1$, as mentioned in \cite{Connelly-global, Connelly-Whiteley-coning}.

In dimension two the situation is even better, depends on the local rigidity properties of $G(\p)$ and depends on the combinatorics of $G$ only.  If $G(\p)$ is a bar framework in $\E^d$, is locally rigid and remains locally rigid after the removal of any bar, we say that $G(\p)$ is \emph{redundantly rigid} in $\E^d$. 

\begin{theorem}\label{thm:global-dim-2}
A bar framework $G(\p)$ with $\p$ generic is globally rigid in $\E^2$ if and only if $G(\p)$ is redundantly rigid and $3$-connected.
\end{theorem}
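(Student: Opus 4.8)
The plan is to treat the two implications separately. The ``only if'' direction I would establish by direct geometric obstructions, while for the ``if'' direction I would reduce everything to the stress--matrix criterion of Theorem \ref{thm:generic-global}. Since $\p$ is generic we may assume $n \ge 4$ (a complete graph on at most three vertices is globally rigid, and is the one exceptional family not covered by Theorem \ref{thm:generic-global}), so that global rigidity in $\E^2$ is equivalent to the existence of a non-zero equilibrium stress whose stress matrix has the maximal rank $n-3$, i.e.\ Condition \ref{condition:rank}.) in dimension two.

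For \emph{necessity} I would argue by contraposition, reproducing the two classical obstructions. If $G$ is not $3$-connected, take a $2$-vertex cut $\{u,v\}$ and reflect one of the resulting pieces across the line through $\p_u$ and $\p_v$. Because $\p_u$ and $\p_v$ lie on the reflection axis, every edge length is preserved, so the reflected configuration $\q$ satisfies $G(\q) \simeq G(\p)$; genericity guarantees $\q \not\cong \p$, contradicting global rigidity. If $G$ is not redundantly rigid, then (as $G$ is rigid, being globally rigid) some edge $e$ has $G-e$ flexible; along a nontrivial flex of $G-e$ the length of $e$ cannot stay constant, since otherwise the flex would also flex $G$. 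Hence that length returns to its original value at a non-congruent configuration, once more contradicting global rigidity.

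The \emph{sufficiency} direction is the substantial one, and I would establish it by induction on $n$, in the form: it suffices to exhibit, at a generic configuration, an equilibrium stress whose matrix has rank $n-3$. The base cases are the smallest $3$-connected redundantly rigid graphs, beginning with $K_4$, for which such a stress is immediate. For the inductive step the governing combinatorial fact is that every $3$-connected redundantly rigid graph on $n \ge 5$ vertices admits an inverse $1$-extension to a smaller $3$-connected redundantly rigid graph $G'$; that is, $G$ arises from $G'$ by deleting an edge $\{a,b\}$ and inserting a new vertex joined to $a$, $b$, and one further vertex. Granting this, I would invoke the fact that a $1$-extension lifts a maximal-rank equilibrium stress of $G'(\p')$ to one of $G(\p)$ at generic configurations, so that $G(\p)$ inherits the rank-$(n-3)$ stress matrix demanded by Theorem \ref{thm:generic-global}.

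The main obstacle is the combinatorial reduction feeding the induction: proving that an admissible inverse $1$-extension always exists while preserving both $3$-connectivity and redundant rigidity. This is where the fine structure of the generic two-dimensional rigidity matroid is needed -- redundant rigidity places every edge in an $M$-circuit, and one analyzes the circuits and ear decompositions of this matroid to locate a degree-three vertex whose contraction leaves the two hypotheses intact, treating the low-connectivity boundary cases separately. The analytic companion to this difficulty is verifying that the $1$-extension genuinely raises the stress-matrix rank rather than merely preserving rigidity, which must be checked at the generic configuration using the equilibrium conditions (\ref{equilibrium}).
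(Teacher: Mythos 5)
The paper does not actually prove this theorem: it is quoted from the literature, with the ``only if'' direction attributed to Hendrickson and the ``if'' direction to Berg--Jord\'an, Jackson--Jord\'an, and Connelly. Your outline correctly reconstructs the architecture of those cited proofs (reflection through a $2$-cut and the flexing argument for necessity; an inductive construction feeding the rank-$(n-3)$ stress criterion of Theorem \ref{thm:generic-global} for sufficiency), but two steps are not right as written. In the necessity argument for redundant rigidity, ``the length of $e$ is non-constant along the flex, hence it returns to its original value at a non-congruent configuration'' is a non sequitur: a non-constant function along a path need not reattain its initial value. Hendrickson's actual argument shows that the relevant component of the configuration space of $G-e$ modulo congruence, at generic $\p$, is a compact one-dimensional manifold --- a circle --- on which the length function of $e$ is smooth and non-constant and generic $\p$ is not a critical point, so the value is attained a second time; the compactness/properness step is where the real work lies and it is absent from your sketch.

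More seriously, the ``governing combinatorial fact'' driving your induction is false as stated: $K_5$ is $3$-connected and redundantly rigid on five vertices but every vertex has degree four, so it admits no inverse $1$-extension whatsoever. The Jackson--Jord\'an reduction is necessarily mixed: at each step one either performs an admissible inverse $1$-extension at a degree-three vertex or deletes an edge, in both cases preserving $3$-connectivity and $M$-connectivity, and one needs the separate nontrivial lemma that $3$-connected plus redundantly rigid implies $M$-connected before the induction can start. Correspondingly the upward induction rests on two facts: that a $1$-extension preserves generic global rigidity (Connelly's theorem, itself a substantial stress-matrix limit/perturbation argument at the new degree-three vertex), and that adding an edge trivially preserves it. With those repairs your outline matches the known proof, but the two pillars you defer --- the full combinatorial reduction and the $1$-extension stress lemma --- constitute essentially the entire content of the cited papers, so the proposal should be read as a roadmap rather than a proof.
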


The ``only if" part of Theorem \ref{thm:global-dim-2} is due to B. Hendrickson in \cite{Hendrickson-unique}.  The ``if" part of Theorem \ref{thm:global-dim-2} is by A. Berg and T. Jordan; B. Jackson and T. Jordan; R. Connelly \cite{Berg-Jordan, Jackson-Jordan, Connelly-global}.  The pebble game of \cite{Jacobs-Hendrickson} provides an efficient purely combinatorial algorithm to compute generic redundant rigidity in the plane, and the computation of connectedness is known to have efficient polynomial time algorithms, so Theorem \ref{thm:global-dim-2} essentially provides a computationally effective method for computing generic global rigidity in the plane.

We say that a graph $G$ has the \emph{Hendrickson property} in $\E^d$ if $G$ is $(d+1)$-connected and $G(\p)$ is redundantly rigid in $\E^d$, when $\p$ is generic.  In \cite{Hendrickson-unique} B. Hendrickson shows the following:

\begin{theorem}\label{thm:Hendrickson}
If a bar framework $G(\p)$ with $\p$ generic is globally rigid in $\E^d$ then $G(\p)$ is redundantly rigid and $(d+1)$-connected.
\end{theorem}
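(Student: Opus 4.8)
The plan is to prove the contrapositive in two independent parts: if $G$ fails to be redundantly rigid then $G(\p)$ is not globally rigid, and if $G$ fails to be $(d+1)$-connected then $G(\p)$ is not globally rigid. Throughout I will use that a globally rigid framework is in particular locally rigid, and that at a generic $\p$ local rigidity is equivalent to the rigidity matrix $R(\p)$ having maximal rank $dn - d(d+1)/2$. Since $n \ge d+2$, I will also lean heavily on Theorem \ref{thm:generic-global}, which converts generic global rigidity into the existence of a nonzero equilibrium stress whose stress matrix $\Omega$ has rank $n-d-1$.

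For redundant rigidity, suppose some edge $e$ can be deleted without destroying generic local rigidity, i.e.\ $G-e$ is flexible at $\p$ while $G(\p)$, being globally rigid, is rigid. Then $\operatorname{rank} R(G)(\p)$ exceeds $\operatorname{rank} R(G-e)(\p)$, so the row of $R(G)(\p)$ indexed by $e$ is linearly independent of the remaining rows. An equilibrium stress of $G$ is exactly a linear dependence among the rows of $R(G)(\p)$, so every equilibrium stress $\omega$ of $G$ must satisfy $\omega_e = 0$. Now invoke Theorem \ref{thm:generic-global}: global rigidity of $G(\p)$ furnishes a nonzero $\omega$ with $\operatorname{rank}\Omega = n-d-1$. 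Because $\omega_e=0$, this $\omega$ is supported on $G-e$ and carries the identical stress matrix $\Omega$. Applying the ``if'' direction of Theorem \ref{thm:generic-global} to $G-e$ (still generic, still $n\ge d+2$ vertices) shows $G-e$ is itself globally rigid, hence rigid, contradicting the assumed flexibility. Thus no such $e$ exists and $G$ is redundantly rigid.

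For $(d+1)$-connectivity, suppose instead there is a vertex cut $S$ with $s=|S|\le d$, splitting the vertices into $S$ and two nonempty, edge-disconnected pieces $A$ and $B$. Generically the $s$ points $\p_S$ span an affine flat $F$ of dimension $s-1 \le d-1$. I will build a configuration $\q$ agreeing with $\p$ on $A\cup S$ and obtained by applying to $\p_B$ some isometry $\rho$ that fixes $F$ pointwise; since there are no $A$--$B$ edges and $\rho$ preserves all lengths inside $B\cup S$, the framework $G(\q)$ is equivalent to $G(\p)$. When $s\le d-1$ the orthogonal complement of $F$ has dimension at least $2$, so $\rho$ may be taken in a one-parameter rotation subgroup, producing a continuous family; a generic point of $A$ and a generic point of $B$ change their mutual distance under this rotation, so the motion is a nontrivial flex and $G(\p)$ is not even rigid. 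When $s=d$ the flat $F$ is a hyperplane, $\rho$ is the reflection across it, and $A\cup S$, having at least $d+1$ points, affinely spans $\E^d$; any congruence carrying $\p$ to $\q$ would then fix $A\cup S$ pointwise, hence be the identity, forcing $\p_B\subset F$, which genericity forbids. In either case $G(\p)$ admits a noncongruent equivalent realization, contradicting global rigidity.

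The main obstacle is the non-congruence bookkeeping in the connectivity argument: producing a length-preserving $\q$ is easy, but ruling out that $\q$ is merely a congruent copy of $\p$ is where genericity does the real work, and a single reflection does not always suffice (when both $A\cup S$ and $B\cup S$ fail to span $\E^d$). The clean way around this is to let the cut size dictate the move, a positive-dimensional rotation (yielding a flex that already defeats mere rigidity) whenever $s<d$, and the reflection across the spanned hyperplane exactly when $s=d$, the case in which $A\cup S$ is guaranteed to span $\E^d$ so the reflected copy cannot be congruent. I expect the redundant-rigidity half to be essentially immediate from Theorem \ref{thm:generic-global}; the connectivity half is where the care is needed.
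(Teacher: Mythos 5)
Your argument is correct, but note that the paper does not actually prove this statement: it is quoted as Hendrickson's theorem and attributed to \cite{Hendrickson-unique}, so the only fair comparison is with Hendrickson's original argument. Your connectivity half is essentially that classical argument (rotate the piece $B$ about the flat spanned by a cut set of size $s\le d-1$ to get a nontrivial flex; reflect it in the hyperplane spanned by a cut set of size $d$ to get an equivalent, non-congruent realization, with genericity ruling out $\p_B$ lying in that hyperplane). Your redundant-rigidity half, however, takes a genuinely different route: Hendrickson deletes the bar $e$, observes that the resulting generic framework has a one-dimensional flex along which the length of $e$ is non-constant, and uses a compactness/degree argument on that flex to find a second configuration realizing the original length of $e$ non-congruently. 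You instead observe that if $G-e$ were flexible then every equilibrium stress of $G(\p)$ would vanish on $e$ (the row of $e$ being independent of the rest of the rigidity matrix), so the maximal-rank stress supplied by the ``only if'' direction of Theorem \ref{thm:generic-global} lives on $G-e$, and the ``if'' direction then makes $G-e$ globally, hence locally, rigid --- a contradiction. This is shorter and fits the toolkit of the present paper, but it rests on the full Gortler--Healy--Thurston characterization, a much deeper and historically later result than what it is being used to prove (there is no circularity, since their proof does not use Hendrickson's theorem, but the classical proof is far more elementary and self-contained). One small caveat, shared by the paper's own phrasing of the statement: as written the theorem fails for complete graphs on at most $d+1$ vertices, which are generically globally rigid but neither redundantly rigid nor $(d+1)$-connected; your standing assumption $n\ge d+2$ quietly excludes these, and that exclusion should really appear in the hypothesis.
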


Originally Hendrickson conjectured the converse of Theorem \ref{thm:Hendrickson} for $d\ge 3 $, but that is false since, in \cite{Connelly-K33}, it is shown that the complete bipartite graph $K(5,5)$ has the Hendrickson property in $\E^3$, but it is not globally rigid in $\E^3$, and there are other examples shown by S. Frank and J. Jiayang \cite{Frank-Jiayang}. In particular for $\E^5$, there are infinitely many examples, and similarly there are infinitely many examples for each $d \ge 5$.

\section{Combining generic globally rigid bar frameworks}

In $\E^d$ for $d \ge 3$, there is no known efficient deterministic combinatorial algorithm to compute generic global rigidity.  So it is reasonable to consider special combinatorial ways to create generically globally rigid bar frameworks from others especially in the spirit of Section \ref{sect:combining-tensegrities}.

One very natural way to combine two frameworks is to assume some overlap of the vertices and remove some of the members joining the common vertices.  If some members belong to one side, but not the other, the following natural result by K. Ratmanski \cite{Ratmanski} is useful.

\begin{theorem}\label{thm:combining-frameworks-Ratmanski} Suppose that $G_1(\p)$ and $G_2(\q)$ are globally rigid bar frameworks in  $\E^d$ with $d+1$ vertices (or more) in common such that $\p \cup \q$ is generic.  Let $G$ be the graph obtained by taking the union of their vertices and members, but  deleting those members from $G_2$ not in $G_1$.  Then the bar framework  $G(\p \cup \q)$ is also globally rigid in $\E^d$.
  \end{theorem}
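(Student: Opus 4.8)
The plan is to argue directly from the definition of global rigidity, using the global rigidity of the two pieces together with the fact that the deleted edges all live among the shared vertices. Write $C = V(G_1) \cap V(G_2)$ for the set of common vertices, so $|C| \ge d+1$, and note that the combined configuration $\p \cup \q$ by definition restricts to $\p$ on $V(G_1)$ and to $\q$ on $V(G_2)$, hence $\p$ and $\q$ agree on $C$. The members deleted in forming $G$ are exactly the edges of $G_2$ with both endpoints in $C$ that are not already edges of $G_1$; every edge of $G_1$, and every edge of $G_2$ incident to a vertex outside $C$, survives. Since $\p \cup \q$ is generic, the common points contain $d+1$ that are affinely independent and so span $\E^d$. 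Let $\r$ be any configuration in $\E^d$ with $G(\r) \simeq G(\p \cup \q)$; the goal is to show $\r \cong \p \cup \q$.

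First I would pin down the $G_1$-part. Because no edge of $G_1$ is deleted, all $G_1$-edge lengths are preserved, so $G_1(\r|_{V(G_1)}) \simeq G_1(\p)$. Global rigidity of $G_1(\p)$ then provides an isometry of $\E^d$ carrying $\r|_{V(G_1)}$ onto $\p$; applying this isometry to all of $\r$ (which preserves every distance, hence the relation $G(\r)\simeq G(\p\cup\q)$) we may assume $\r_i = \p_i$ for every $i \in V(G_1)$. In particular $\r_i = \p_i = \q_i$ for every $i \in C$, so the two configurations now agree exactly on the common vertices.

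Next I would recover the $G_2$-part, and here the deletion is handled for free. Every edge of $G_2$ retains its original length in $\r$: the edges surviving in $G$ do so because $G(\r) \simeq G(\p\cup\q)$, while each deleted edge lies inside $C$, where $\r$ and $\q$ now coincide, so its length is automatically correct. Hence $G_2(\r|_{V(G_2)}) \simeq G_2(\q)$, and global rigidity of $G_2(\q)$ yields an isometry $T$ with $T(\r_i) = \q_i$ for all $i \in V(G_2)$. On $C$ we have $\r_i = \q_i$, so $T$ fixes the points $\{\q_i : i \in C\}$, which include $d+1$ affinely independent points spanning $\E^d$; an isometry fixing such a set is the identity. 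Therefore $\r_i = \q_i$ for all $i \in V(G_2)$ as well, and combined with $\r_i = \p_i$ on $V(G_1)$ this gives $\r = \p \cup \q$, so $\r \cong \p \cup \q$, as required.

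The heart of the argument, and the point I would be most careful about, is the recovery step: the deletion is harmless precisely because every erased member joins two vertices of $C$, so once $G_1$'s global rigidity has pinned the common vertices exactly onto $\p|_C = \q|_C$, the missing lengths are reconstituted at no cost, and genericity supplies the $d+1$ affinely independent common points needed to eliminate the residual isometry $T$. One could instead try the stress-matrix route of Theorem \ref{thm:generic-global}, building the combined stress matrix $\hat\Omega_1 + \hat\Omega_2$ (the two maximal-rank stress matrices padded by zeros to the full vertex set) and checking it has rank $n - d - 1$; the universality description of the kernels shows that a vector lying in both kernels must be the restriction of a single global affine function, forcing the kernel dimension down to $d+1$. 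The delicate part of that route is ruling out extra cancellation in the sum, since stress matrices of bar frameworks need not be positive semidefinite and the clean kernel-intersection identity for positive semidefinite forms is unavailable; the direct argument above sidesteps this difficulty entirely.
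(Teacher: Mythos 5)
Your proposal is correct and follows essentially the same route as the paper's proof: apply global rigidity of $G_1(\p)$ to pin down the common vertices (which recovers the lengths of the deleted members, all of which join common vertices), then apply global rigidity of $G_2(\q)$, and finally use the $d+1$ affinely independent common points guaranteed by genericity to force the two congruences to agree. Your version merely normalizes by an isometry first and spells out the patching step that the paper leaves terse.
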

  \begin{proof} This follows directly from the statement of global rigidity.  Suppose that the framework $G(\p \cup \q)$ is equivalent to $G(\hat{\p} \cup \hat{\q})$ in $\E^d$.  Since $G_1(\p)$ is globally rigid, the configurations $\p$ and $\hat{\p}$ are congruent.  So all the lengths of members in $\p \cap \q$ are preserved.  So  $\q$ and $\hat{\q}$ are congruent since  $G_2(\q)$ is globally rigid.  Since $\p \cup \q$ are generic and there are $d+1$ vertices in common, $\hat{\p} \cup \hat{\q}$ is congruent to $\p \cup \q$. \eop 
  \end{proof}

In order to treat the case when we delete a common member, first consider the following. We need an elementary Lemma from linear algebra.  
 
 \begin{lemma} \label{lem:sum}  Suppose that $\Omega_1$ and $\Omega_2$  are two $n$-by-$n$ symmetric matrices, such that the dimension of  $\ker \Omega_1 \cap \ker \Omega_2$ is $k$, and the rank of $ \mathrm{rank}\{ \Omega_i\} = r_i, \,\, i=1, 2$, where $r_1 + r_2 = n-k$.
Then 
 \begin{equation}\label{eqn:span} \mathrm{rank}  \{ t \Omega_1 +  (1-t) \Omega_2\} = n-k , \,\,  t \ne \pm1.
 \end{equation}
 \end{lemma}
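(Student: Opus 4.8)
The plan is to show that, under the rank hypothesis, the kernel of the combination is as small as it can possibly be, namely $\ker\Omega_1\cap\ker\Omega_2$, for every $t$ for which both scalar coefficients are nonzero. Write $M_t = t\Omega_1 + (1-t)\Omega_2$ and $W=\ker\Omega_1\cap\ker\Omega_2$, so $\dim W = k$. One inclusion is immediate: any $x\in W$ is annihilated by both $\Omega_1$ and $\Omega_2$, hence by $M_t$, so $W\subseteq\ker M_t$ for all $t$, giving $\mathrm{rank}\,M_t\le n-k$ automatically. All the content is in the reverse inclusion $\ker M_t\subseteq W$.

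The first real step is to translate the hypothesis $r_1+r_2=n-k$ into a transversality statement about the ranges. Since each $\Omega_i$ is symmetric, its column space is $(\ker\Omega_i)^\perp$. A dimension count gives $\dim(\ker\Omega_1+\ker\Omega_2)=(n-r_1)+(n-r_2)-k=2n-(r_1+r_2)-k=n$, so $\ker\Omega_1+\ker\Omega_2=\R^n$. Taking orthogonal complements and using $(U+V)^\perp=U^\perp\cap U^{\perp}$, i.e. $(U+V)^\perp = U^\perp\cap V^\perp$, yields the key fact
\[
\mathrm{col}\,\Omega_1\ \cap\ \mathrm{col}\,\Omega_2 \;=\; (\ker\Omega_1+\ker\Omega_2)^\perp \;=\; \{\mb 0\}.
\]
In other words, the rank hypothesis is exactly the assertion that the ranges of $\Omega_1$ and $\Omega_2$ are independent subspaces of $\R^n$.

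With this in hand the finish is short. Suppose $M_t x=\mb 0$, that is $t\,\Omega_1 x=-(1-t)\,\Omega_2 x$. The left-hand vector lies in $\mathrm{col}\,\Omega_1$ and the right-hand vector lies in $\mathrm{col}\,\Omega_2$; since these subspaces meet only in $\mb 0$, each side must vanish separately: $t\,\Omega_1 x=\mb 0$ and $(1-t)\,\Omega_2 x=\mb 0$. As long as both coefficients $t$ and $1-t$ are nonzero we may cancel them to obtain $\Omega_1 x=\Omega_2 x=\mb 0$, i.e. $x\in W$. Hence $\ker M_t=W$ and $\mathrm{rank}\,M_t=n-k$. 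This is precisely where the excluded values of $t$ enter: the rank genuinely drops exactly when one of the two scalar coefficients vanishes, for then $M_t$ collapses to a multiple of a single $\Omega_i$, whose rank $r_i$ is strictly smaller than $n-k$.

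I expect the only substantive step to be the translation in the middle paragraph, namely recognizing that the purely numerical condition $r_1+r_2=n-k$ is equivalent to the geometric statement that $\ker\Omega_1$ and $\ker\Omega_2$ together span $\R^n$, or dually that $\mathrm{col}\,\Omega_1$ and $\mathrm{col}\,\Omega_2$ intersect only in $\mb 0$. Once this independence is isolated, both the upper bound (from the trivial inclusion $W\subseteq\ker M_t$) and the lower bound (from splitting the defining relation into its two independent range components) fall out mechanically, and symmetry of the $\Omega_i$ is used only once, to identify $\mathrm{col}\,\Omega_i$ with $(\ker\Omega_i)^\perp$.
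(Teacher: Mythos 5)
Your proof is correct and follows essentially the same route as the paper's: both use symmetry to identify $\mathrm{Im}\,\Omega_i$ with $(\ker\Omega_i)^\perp$, deduce from $r_1+r_2=n-k$ that the two images meet only in $\mb{0}$, and then split the equation $t\Omega_1 x = -(1-t)\Omega_2 x$ to conclude $\ker(t\Omega_1+(1-t)\Omega_2)=\ker\Omega_1\cap\ker\Omega_2$; the only cosmetic difference is that the paper first restricts to $(\ker\Omega_1\cap\ker\Omega_2)^\perp$ to reduce to $k=0$, while you work directly in $\R^n$. Note that, like the paper's own proof, you establish the conclusion for $t\ne 0,1$ rather than the $t\ne\pm 1$ appearing in the statement, which is evidently a typo.
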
 
 \begin{proof} Since $\Omega_1$ and $\Omega_2$ are symmetric, and  $\ker \Omega_1 \cap \ker \Omega_2$ is an invariant subspace of both $\Omega_1$ and $\Omega_2$, we can restrict to the orthogonal complement of $\ker \Omega_1 \cap \ker \Omega_2$.  So we may assume, without loss of generality, that $k=0$.
 
  Again, since $\Omega_1$ and $\Omega_2$ are both symmetric, the orthogonal complements $(\ker \Omega_1)^{\perp}$ and $(\ker \Omega_2)^{\perp}$  are the images  $\mbox{Im}\, \Omega_1 = (\ker \Omega_1)^{\perp}$, $\mbox{Im}\, \Omega_2 = (\ker \Omega_2)^{\perp}$, respectively. Because  $[(\ker \Omega_1)^{\perp} + (\ker \Omega_2)^{\perp}]^{\perp} = \ker \Omega_1 \cap \ker \Omega_2 =  \{\mb{0}\}$, then $(\ker \Omega_1)^{\perp} + (\ker \Omega_2)^{\perp} = \R^n$.  In other words, the combined images of $\Omega_1$ and $\Omega_2$ span.   Since $r_1+r_2 = n$  these spaces are complementary in $\R^n$.  Thus for all $t \ne 0, 1$, $ t \Omega_1 +  (1-t) \Omega_2$ is non-singular.  \eop  
  \end{proof}

  
 We next apply this to stress matrices.
 
 \begin{lemma}\label{lem:overlap}Suppose that $G_1(\p)$ and $G_2(\q)$ are two bar frameworks in $\R^d$, with $n_1$ and $n_2$ vertices, respectively, that share exactly $d+1$ vertices not lying in a $(d-1)$-dimensional hyperplane, and with corresponding stress matrices $\Omega_1$ and $\Omega_2$.  Extend $\Omega_1$ to  $\tilde{\Omega}_1$ to include the vertices $\q$ of $G_2$ not in $G_1$, but with $0$ stress on all the extra pairs of vertices.  Similarly extend $\Omega_2$ to  $\tilde{\Omega}_2$.  If each $\Omega_i$ has maximal rank $n_i - (d+1)$, then for all values of $t \ne 0, 1$, $t \tilde{\Omega}_1+ (1-t) \tilde{\Omega}_2$ has maximal rank $n_1 + n_2 - 2(d+1)$.  
 \end{lemma}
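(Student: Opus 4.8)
The plan is to deduce the statement from the elementary Lemma \ref{lem:sum} applied to the two extended matrices $\tilde\Omega_1$ and $\tilde\Omega_2$, which both act on $\R^N$ with $N = n_1 + n_2 - (d+1)$. First I would fix notation by partitioning the combined vertex set into the $d+1$ shared vertices $S$, the vertices $A$ lying only in $G_1$, and the vertices $B$ lying only in $G_2$, so that $N = (d+1) + (n_1-(d+1)) + (n_2-(d+1))$. In the ordering $(S,A,B)$, the matrix $\tilde\Omega_1$ is supported on the $(S\cup A)$ block and has all rows and columns indexed by $B$ equal to zero, and symmetrically $\tilde\Omega_2$ vanishes on the $A$ rows and columns. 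Since padding a symmetric matrix with zero rows and columns leaves its rank unchanged, I record at once that $\mathrm{rank}\,\tilde\Omega_i = \mathrm{rank}\,\Omega_i = n_i - (d+1)$, hence $r_1 + r_2 = n_1 + n_2 - 2(d+1)$.

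To apply Lemma \ref{lem:sum} I must check its hypothesis $r_1 + r_2 = N - k$, where $k = \dim(\ker\tilde\Omega_1 \cap \ker\tilde\Omega_2)$. Since $N - (n_1+n_2-2(d+1)) = d+1$, this reduces to proving that the intersection of the two kernels has dimension exactly $d+1$, which is the crux of the whole argument. The key input is that, because each $\Omega_i$ has maximal rank $n_i-(d+1)$ and the configuration affinely spans $\R^d$, the kernel of $\Omega_i$ is precisely the $(d+1)$-dimensional space of affine vectors spanned by the all-ones vector together with the $d$ coordinate vectors of the configuration, as recalled earlier in the paper. Concretely, a vector lies in $\ker\Omega_1$ exactly when its entries have the form $i \mapsto a^{\top}\p_i + b$ for some $a\in\R^d,\ b\in\R$, and likewise for $\ker\Omega_2$ with the configuration $\q$.

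I then intend to describe the intersection explicitly. A vector $v=(v_S,v_A,v_B)$ lies in $\ker\tilde\Omega_1$ iff $(v_S,v_A)\in\ker\Omega_1$ with $v_B$ free, and in $\ker\tilde\Omega_2$ iff $(v_S,v_B)\in\ker\Omega_2$ with $v_A$ free. Hence $v$ lies in the intersection iff there are affine functions $f(x)=a^{\top}x+b$ and $g(x)=c^{\top}x+e$ with $v_i = f(\p_i)$ on $S\cup A$ and $v_i = g(\q_i)$ on $S\cup B$. On the shared vertices the two frameworks are superimposed, so $\p_i=\q_i$ for $i\in S$, and $f$ and $g$ must agree on $S$. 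Because the $d+1$ vertices of $S$ do not lie in a $(d-1)$-dimensional hyperplane, they are affinely independent, and an affine function on $\R^d$ is determined by its values on $d+1$ affinely independent points; therefore $f=g$. Thus every element of the intersection arises from a single affine function, and conversely each affine function produces such a $v$, giving $k=d+1$.

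Finally, with $r_1+r_2 = n_1+n_2-2(d+1) = N-k$ confirmed, Lemma \ref{lem:sum} yields $\mathrm{rank}(t\tilde\Omega_1 + (1-t)\tilde\Omega_2) = N-k = n_1+n_2-2(d+1)$ for all $t\ne 0,1$. Since $n_1+n_2-2(d+1) = N-(d+1)$ is exactly the maximal possible rank of a stress matrix on $N$ vertices in $\R^d$, this is the desired conclusion. I expect the only genuine obstacle to be the kernel-intersection count of the third paragraph; the rest is bookkeeping, and that step rests entirely on the general-position hypothesis for the $d+1$ shared vertices, which is precisely what forces the two affine functions to coincide.
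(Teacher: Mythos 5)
Your proposal is correct and follows essentially the same route as the paper: reduce to Lemma \ref{lem:sum} by showing $\dim(\ker\tilde{\Omega}_1\cap\ker\tilde{\Omega}_2)=d+1$, which the paper phrases via universal configurations and you phrase via the explicit basis of each kernel by affine functions of the configuration, with the $d+1$ affinely independent shared vertices forcing the two affine functions to coincide. The two arguments are the same in substance, and your explicit kernel description is a clean, correct instantiation of the paper's universality argument.
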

 
\begin{proof} By the maximal rank condition, $\dim (\ker \Omega_1)= \dim (\ker \Omega_2) = d+1$, and $\dim (\ker \tilde{\Omega}_1) = d+1 + n_2-(d+1)$ while $\dim (\ker \tilde{\Omega}_2) = d+1 + n_1-(d+1)$.   Each of these kernels corresponds to a universal configuration that has the vertices of $\p$ and $\q$, for $\tilde{\Omega}_1$ and $\tilde{\Omega}_2$, respectively, such that they lie in a $d$-dimensional affine linear space, while the extra vertices each correspond to a higher dimensional configuration.

 The union of the vertices of $\p$ and $\q$, $\p \cup \q$, is a configuration that satisfies the equilibrium equations of the stresses corresponding to both $\tilde{\Omega}_1$ and $\tilde{\Omega}_2$.  If $\bar{\p} \cup \bar{\q}$ is another configuration that satisfies the equilibrium equations of the stresses corresponding to both $\tilde{\Omega}_1$ and $\tilde{\Omega}_2$, then $\bar{\p}$ is an affine image of $\p$ and $\bar{\q}$ is an affine image of $\q$, since $\p$ and $\q$ are universal with respect to the stresses corresponding to $\Omega_1$ and $\Omega_2$, respectively.  Thus, by extending the correspondence between $\p \cap \q$ and  $\bar{\p} \cap \bar{\q}$, we get an affine map from $\p \cup \q$ to  $\bar{\p} \cup \bar{\q}$.  Thus $\p \cup \q$ corresponds to a basis for the intersection of the kernels of $\tilde{\Omega}_1$ and $\tilde{\Omega}_2$.  The affine span of $\p$ and $\q$ are both $d$-dimensional with $d+1$ affine independent points in the intersection, so  $\p \cup \q$ has a $d$-dimensional affine span.   In other words, $\ker\tilde{\Omega}_1 \cap \ker \tilde{\Omega}_2$ has dimension $d+1$.  Then Lemma \ref{lem:sum} implies the conclusion with $k= d+ 1$, and $ \mathrm{rank}\{ \Omega_i\} = r_i, \,\, i=1, 2$, since $n=n_1+n_2 - (d+1)$, and $r_1 +r_2 = n_1 -(d+1) + n_2 -(d+1)  = n - (d+1)$.  \eop
\end{proof}

\section{The main theorem}

\begin{theorem}\label{thm:combining-frameworks} Suppose that $G_1(\p)$ and $G_2(\q)$ are globally rigid bar frameworks in  $\E^d$, with $\p \cup \q$ generic, exactly $d+1$ vertices in common, each with at least $d+2$ vertices, and a bar $\{i,j\}$ in  $G_1$ and $G_2$.  Then the bar framework  $G(\p \cup \q)$ obtained by superimposing their common vertices and bars, but erasing the bar $\{i,j\}$, is also globally
 rigid in $\E^d$.
  \end{theorem}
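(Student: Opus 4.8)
The plan is to produce a single non-zero equilibrium stress for $G(\p \cup \q)$ whose stress matrix has maximal rank, and then invoke Theorem~\ref{thm:generic-global}. Write $n_1, n_2$ for the numbers of vertices of $G_1, G_2$, so $G$ has $n = n_1 + n_2 - (d+1) \ge 2(d+2)-(d+1) = d+3 \ge d+2$ vertices. Since $\p$ and $\q$ are subconfigurations of the generic configuration $\p \cup \q$ they are themselves generic, so Theorem~\ref{thm:generic-global} applies to each of $G_1(\p)$ and $G_2(\q)$ and supplies equilibrium stresses with stress matrices $\Omega_1, \Omega_2$ of maximal rank $n_i - (d+1)$. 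Extending these by zeros to the common vertex set yields $\tilde{\Omega}_1, \tilde{\Omega}_2$, and Lemma~\ref{lem:overlap} already guarantees that $t\tilde{\Omega}_1 + (1-t)\tilde{\Omega}_2$ has maximal rank $n-(d+1)$ for every $t \ne 0,1$.

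The whole construction then rests on combining the two stresses so that the stress on the shared bar $\{i,j\}$ cancels while the maximal rank is preserved. In the combination $t\tilde{\Omega}_1 + (1-t)\tilde{\Omega}_2$ the stress carried by $\{i,j\}$ is the affine function $t\,\omega_{1,ij} + (1-t)\,\omega_{2,ij}$ of $t$, so I need this to vanish at some admissible $t \ne 0,1$. This is possible, with $t$ bounded away from $0$ and $1$, precisely when I can arrange $\omega_{1,ij} \ne 0$ and $\omega_{2,ij} \ne 0$ with the two stresses maximal in rank.

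Producing such stresses is the main obstacle, and it is where the hypotheses beyond Lemma~\ref{lem:overlap} enter. By Hendrickson's Theorem~\ref{thm:Hendrickson} each of $G_1(\p)$ and $G_2(\q)$ is redundantly rigid, so deleting the bar $\{i,j\}$ leaves a locally rigid framework; hence the row of the rigidity matrix indexed by $\{i,j\}$ lies in the span of the other rows, which is exactly an equilibrium stress $\omega'_k$ (for $k=1,2$) non-zero on $\{i,j\}$, with associated stress matrix $\Omega'_k$. Now I perturb: $\Omega_k + s\,\Omega'_k$ is again a stress matrix for the same generic, $d$-dimensional configuration, so its rank is at most $n_k-(d+1)$, while lower semicontinuity of rank keeps the rank equal to the maximal value $n_k-(d+1)$ for all small $s$. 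Choosing $s \ne 0$ small makes the $\{i,j\}$ entry non-zero, giving maximal-rank equilibrium stresses $\omega_1, \omega_2$ with $\omega_{1,ij}, \omega_{2,ij} \ne 0$.

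Finally I rescale so that $\omega_{1,ij} = 1$ and $\omega_{2,ij} = -1$ and set $t = \tfrac12$. The stress $\omega = \tfrac12\tilde{\omega}_1 + \tfrac12\tilde{\omega}_2$ is an equilibrium stress for $\p \cup \q$ (equilibrium is linear, and each extended stress is in equilibrium at every vertex, trivially so at the vertices it omits), it vanishes on $\{i,j\}$ by the choice of scaling, and it is therefore supported on $(E(G_1) \cup E(G_2)) \setminus \{i,j\} = E(G)$, making it a genuine equilibrium stress for $G(\p \cup \q)$. By Lemma~\ref{lem:overlap} its stress matrix $\tfrac12\tilde{\Omega}_1 + \tfrac12\tilde{\Omega}_2$ has rank $n-(d+1) > 0$, so $\omega$ is non-zero and of maximal rank. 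Since $\p \cup \q$ is generic and $n \ge d+2$, Theorem~\ref{thm:generic-global} then yields that $G(\p \cup \q)$ is globally rigid in $\E^d$.
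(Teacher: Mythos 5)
Your proof is correct and follows the paper's main line: extract maximal-rank equilibrium stresses via Theorem \ref{thm:generic-global}, combine them through Lemma \ref{lem:overlap}, rescale so the stresses on $\{i,j\}$ are $1$ and $-1$, take $t=\tfrac12$, and apply Theorem \ref{thm:generic-global} again to the resulting maximal-rank stress supported off $\{i,j\}$. The one place you genuinely diverge is the degenerate case $\omega_{k,ij}=0$: the paper disposes of it by falling back on Ratmanski's Theorem \ref{thm:combining-frameworks-Ratmanski} (if one of the two stresses already vanishes on $\{i,j\}$, that framework minus the bar is itself globally rigid and the union argument finishes), whereas you eliminate the case before it arises by perturbing. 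Your perturbation is sound: Hendrickson's Theorem \ref{thm:Hendrickson} applies because $\p$ and $\q$ are generic (subconfigurations of a generic configuration) and $n_k\ge d+2$, giving redundant rigidity, hence a dependency of the rigidity-matrix rows that is an equilibrium stress $\omega'_k$ with $\omega'_{k,ij}\ne 0$; and $\Omega_k+s\Omega'_k$ retains maximal rank for small $s\ne 0$ by lower semicontinuity of rank combined with the universal upper bound $n_k-(d+1)$ on the rank of any stress matrix of a configuration with $d$-dimensional affine span. What this buys you is a single uniform argument with no case split and no appeal to Theorem \ref{thm:combining-frameworks-Ratmanski}; the price is importing Hendrickson's theorem, a heavier tool than the elementary union argument the paper uses for that case.
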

\begin{proof} By Theorem \ref{thm:generic-global} there are non-zero stress matrices $\Omega_1$ for $G_1(\p)$, and $\Omega_2$  for $G_2(\q)$ such that $\mathrm{rank}\{ \Omega_1\} = n_1- (d+1) \ge 1$, and  $\mathrm{rank}\{ \Omega_2\} = n_2- (d+1) \ge 1$ where $n_1$ is the number of vertices of $G_1$, and  $n_2$ is the number of vertices of $G_2$.  Then Lemma \ref{lem:overlap} implies that for  $t \ne 0, 1$, $t \tilde{\Omega}_1+ (1-t) \tilde{\Omega}_2$ has maximal rank $n_1 + n_2 - 2(d+1)$.  Let $\omega_{ij}(1)$ and $\omega_{ij}(2)$ be the stresses corresponding to $\Omega_1$ and $\Omega_2$, respectively for the bar $\{i,j\}$.  If either $\omega_{ij}(1)=0$ or $\omega_{ij}(2)=0$, Theorem \ref{thm:combining-frameworks-Ratmanski} implies that  $G(\p \cup \q)$ is globally rigid in $\E^d$.  Otherwise by rescaling $\Omega_1$ and $\Omega_2$, if necessary,  we can assume that  $\omega_{ij}(1)= 1$ and  $\omega_{ij}(2) = -1$.  Then Lemma \ref{lem:overlap}, with $t=1/2$, implies that there is a stress matrix, with maximal rank, such that the stress on $\{i,j\}$ is $0$, which allows us to remove it.  Then Theorem \ref{thm:generic-global} applies again to show that the resulting framework with $\{i,j\}$ deleted is globally rigid in $\E^d$.
\eop
\end{proof}

Figure \ref{fig:overlapping-tensegrities} is a typical example in the plane of Theorem \ref{thm:combining-frameworks}, but where the members are interpreted as bars.

It would be interesting to consider the case when there are more than $d+1$ vertices in common, but the method here does not seem to apply directly, since there may be linear combinations of the two stresses that are of lower, and we can't zero out a stress on a given member keeping the maximal rank condition.  For example, when there are $d+2$ vertices in common in $\E^d$, and there is a vertex in the intersection of degree $d+1$, no maximal rank  linear combination of the two stresses can zero out the stress on one bar .  (This is an observation of Tibor Jord\'an.)

 \begin{question} \label{quest:hendricson} Suppose that we combine two graphs that have the Hendrickson property as in Theorem \ref{thm:combining-frameworks}.   Does the resulting framework have the Hendrickson property.
 \end{question}
 
 It seems that the connectivity property holds for the resulting framework.  I don't know about the redundant rigidity property for $d \ge3$.  For $d=2$, the statement of Question \ref{quest:hendricson} is true because the Hendrickson property and generic global rigidity are equivalent by \cite{Jackson-Jordan}.  But, for $d=2$, the redundant rigidity condition holds by itself, without the need of the connectivity condition by a recent result of Bill Jackson and Tibor Jord\'an.
 
 The proof of Lemma \ref{lem:sum} and Lemma \ref{lem:overlap} was inspired by a draft Lemma in \cite{Connelly-Jordan-Whiteley} that was incorrect.  The author is very grateful for Dylan Thurston and Tibor Jord\'an for pointing out a previous incorrect statement (and proof of course) of Lemma \ref{lem:sum}.   The author also thanks Igor Gorbovickis for several useful comments and corrections.  For other related results, see \cite{Jacobs, Sun-Ye, Gortler-Thurston2, Jackson-linked, Connelly-Whiteley-coning, Jordan-Szabadka, Jackson-linked, Alfakih-Ye}.

\bibliographystyle{plain}
\bibliography{NSF-10,framework}


\end{document}